\newcommand*{\clo}{\overline}
\newcommand*{\R}{\ensuremath{\mathbb{R}}}
\renewcommand*{\epsilon}{\varepsilon}
\newtheorem{theorem}{Theorem}
\newtheorem{corollary}[theorem]{Corollary}
\newtheorem{proposition}[theorem]{Proposition}
\newtheorem{conjecture}[theorem]{Conjecture}
\newtheorem{example}[theorem]{Example}
\begin{document}

\title{Which connected spaces have a quotient homeomorphic to an arc}
\author{Micha{\l} Ryszard W\'ojcik}
\maketitle

\begin{abstract}
We announce and examine the conjecture that each infinite connected normal Hausdorff space has a quotient homeomorphic to the unit interval, shown to be true with the additional assumption of compactness or local connectedness. Some connected subsets of the plane are considered to show the difficulties involved in developing a general argument.
\end{abstract}

\section{Introduction}
We say that a topological space collapses to an arc if it has a quotient homeomorphic to the unit interval.
Let us demonstrate that any nondegenerate Hausdorff continuum collapses to an arc.
Indeed, let $X$ be an infinite connected compact normal Hausdorff space and let $a,b\in X$ be arbitrary distinct points. Since $X$ is normal, there is a Urysohn function for $a$ and $b$ --- a continuous function $f\colon X\to[0,1]$ such that $f(a)=0$ and $f(b)=1$. Since $X$ is connected, $f(X)=[0,1]$. Since $X$ is compact, $f$ maps closed sets onto closed sets and so it is a quotient mapping. This means that the fibers of $f$ form a quotient space that is homeomorphic to $[0,1]$. This observation motivates the following conjecture.
\begin{conjecture}
Each infinite connected normal Hausdorff space collapses to an arc (has a quotient homeomorphic to the unit interval).
\end{conjecture}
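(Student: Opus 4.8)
The natural line of attack begins, exactly as in the compact case above, by fixing distinct points $a,b\in X$ together with a Urysohn function $f\colon X\to[0,1]$ satisfying $f(a)=0$ and $f(b)=1$; connectedness already forces $f$ to be onto, so a quotient homeomorphism onto the unit interval would follow the moment $f$ --- or some decomposition derived from it --- turned out to be a quotient map. A convenient sufficient condition, and the one that is automatic when $X$ is compact, is that $f$ be a \emph{closed} map, equivalently that its fibers constitute an upper semicontinuous decomposition of $X$; it should be borne in mind that this is only sufficient, so there is slack to exploit if it cannot be arranged outright. The first concrete step is then to isolate the \emph{bad set}
\[
B=\{t\in[0,1]:\ t\in\overline{f(C)}\setminus f(C)\text{ for some closed }C\subseteq X\},
\]
the set of parameters at which closedness fails --- morally, the values past which a closed set can leak without that value being realized on it. If $B=\emptyset$ we are finished, and this is precisely what happens under compactness.

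When $B\ne\emptyset$ I would try to \emph{repair} $f$ by post-composing it with a monotone continuous surjection $\varphi\colon[0,1]\to[0,1]$, that is, by collapsing a suitable family of pairwise disjoint closed subarcs of $[0,1]$. A monotone image of an arc is again an arc or a single point, so $\varphi\circ f$ remains a continuous surjection onto an arc; and since making $\varphi$ non-injective at a value $t$ visibly absorbs every leak occurring at $t$, one expects that if $\varphi$ is taken to be non-injective at every point of $B$ --- say constant on each component of an open set containing $B$ --- then $\varphi\circ f$ is closed. The delicate point is to verify that the collapsing introduces no new leaks; these can arise only at the endpoints of the collapsed arcs, so it ought to be possible to eliminate them by iterating the repair or passing to a limit. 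The locally connected hypothesis is absorbed by a companion construction that refines $f$ rather than coarsening its target: one replaces the preimages $f^{-1}(I)$ of intervals by their connected components, which are open precisely because $X$ is locally connected, and checks that the decomposition so obtained is upper semicontinuous.

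The crux --- and the step at which I expect the plan to stall in general --- is the size and shape of $B$. Nothing so far prevents $B$ from being all of $[0,1]$, which a sufficiently pathological separating function on a sufficiently pathological space can plausibly force; then any closed set containing $B$ is everything, the monotone repair collapses $X$ to a point, and the method degenerates. One is driven either to choose the initial real-valued function far more shrewdly than ``any Urysohn function for some pair of points,'' or to coarsen the fibers \emph{within} individual levels of $f$, and it is not at all clear that either can be carried out while simultaneously keeping the decomposition space Hausdorff and homeomorphic to an arc (rather than, say, a nondegenerate Peano continuum or a non-compact curve). Here the absence of compactness is decisive: there is no compact overspace available to absorb the leaks, and --- as the connected planar examples in the later sections are meant to dramatize --- the local behaviour of a connected, non-locally-connected set can be intricate enough that no canonical decomposition presents itself. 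A proof of the conjecture in full generality, should one exist, seems likely to require either a genuinely clever choice of the separating map or a purely non-constructive production of the upper semicontinuous decomposition; for the moment I regard this final step as open.
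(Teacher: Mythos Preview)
There is nothing to compare here, because the statement is a \emph{conjecture}: the paper does not prove it, and explicitly frames it as an open problem whose difficulty is illustrated by the planar examples and whose only confirmed instances are the compact and locally connected cases. Your proposal is likewise not a proof and you say so yourself in the final paragraph; the repair-by-monotone-postcomposition idea is a plausible heuristic, but you have correctly identified the fatal obstruction --- the bad set $B$ may be all of $[0,1]$, in which case the repair collapses everything --- and you do not claim to overcome it. So the proposal is an honest sketch of a partial strategy together with an admission that it stalls, which is consistent with the paper's own position that the conjecture is open.

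One minor remark on the locally connected case: your ``companion construction'' (refining fibers to connected components of $f^{-1}(I)$) is not what the paper does. The paper instead proves directly, in Theorem~\ref{locallyconnected}, that \emph{every} continuous surjection $f\colon X\to[0,1]$ from a connected locally connected space is already quotient, with no modification of $f$ needed; the argument is a short connectedness chase using that boundary points of $f^{-1}([0,y))$ exist and sit inside locally connected neighbourhoods. Your component-refinement idea would presumably also work but is a different mechanism.
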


We give a partial affirmative solution for locally connected spaces.
Our main result is that every continuous surjection onto the unit interval defined on a connected locally connected space is quotient. So, if the domain is additionally assumed to be normal and Hausdorff, it collapses to an arc by the same argument as above.

We show examples of connected subsets of the plane with continuous surjections onto the unit interval (Urysohn functions) that are not quotient. However, these spaces collapse to an arc by a careful choice of a quotient Urysohn function which takes into account the particular way these spaces are embedded into the plane. This demonstrates the difficulty of obtaining an appropriate Urysohn function in the generic case of an abstract space that is assumed to satisfy the topological axioms of an infinite connected normal Hausdorff space.

If a normal space has a closed subspace that collapses to an arc then the quotient Urysohn function defined on this closed subspace extends to the whole space and is still quotient. This shows in particular that a counterexample cannot contain a connected compact infinite subset. So a possible counterexample is a connected punctiform space that is not locally connected.

These observations provide an outline for a fundamental research project into the nature of connectedness. If the conjecture turns out to be true it will mean that the unit interval is somehow present whenever appropriate topological axioms are satisfied. On the other hand, a counterexample will exhibit a fascinating pathology and broaden our intuition of a connected space. Even the task of looking at connected, not locally connected punctiform spaces and showing on a case by case basis that each of them collapses to an arc may be an exciting and educating experience sharpening our notion of a connected space.

Finally, let us note that we have discovered a generalization of arcwise connectedness, for if $a,b\in X$ are the endpoints of an arc $[a,b]\subset X$ embedded in the normal Hausdorff space $X$, then there is a homeomorphism $f\colon[a,b]\to[0,1]$ which can be extended to a continuous quotient surjection $f\colon X\to[0,1]$ and thus $X$ collapses to an arc in such a way that $a$ and $b$ lie in different fibers.
It would be interesting to find a space which collapses to an arc but has two points which end up in the same fiber whenever a continuous quotient surjection onto the unit interval is defined on the space.

\section{The results}

\begin{proposition}\label{quotientfibers}
Let $X,Y$ be topological spaces.
Let $f\colon X\to Y$ be a surjection.
Let $Z=\{f^{-1}(y)\colon y\in Y\}$ be the collection of all fibers of $f$.
Let $\mathcal G=\{V\subset Z\colon\bigcup V$ is open in $X\}$.
To each point $y\in Y$ we assign its fiber $\phi(y)=f^{-1}(y)\in Z$.
Then the following conditions are equivalent:
\begin{enumerate}
\item[(1)] $f^{-1}(G)\text{\ is open in }X\implies G\text{ is open in }Y$,
\item[(2)] $\phi\colon Y\to Z\text{\ is continuous}$.
\end{enumerate}
Moreover, the following conditions are equivalent:
\begin{enumerate}
\item[(3)] $G\text{ is open in }Y\implies f^{-1}(G)\text{\ is open in }X$,
\item[(4)] $\phi^{-1}\colon Z\to Y\text{\ is continuous}$.
\end{enumerate}
In all, $\phi$ is a homeomorphism between $Y$ and $Z$ iff $f$ is a continuous quotient map.
Moreover, a space has a quotient homeomorphic to the unit interval iff it admits a continuous quotient surjection onto the unit interval.
\end{proposition}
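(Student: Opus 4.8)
The plan is to route the whole proposition through a single bookkeeping identity linking unions of fibers with preimages, after which each of the four equivalences is an unwinding of definitions and the final sentence is an immediate corollary. Before that, I would dispose of the structural facts: $\mathcal G$ is a topology on $Z$ --- namely the quotient topology carried by $Z$ under the canonical surjection $x\mapsto f^{-1}(f(x))$ --- since the fibers partition $X$, so that $\bigcup(V_1\cap V_2)=(\bigcup V_1)\cap(\bigcup V_2)$ and $\mathcal G$ is closed under finite intersections and arbitrary unions; and $\phi$ is a bijection of $Y$ onto $Z$, onto by the definition of $Z$ and injective because distinct points of $Y$ have disjoint nonempty fibers (here surjectivity of $f$ is used). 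The identity I would isolate is
\[
\bigcup\phi(G)=f^{-1}(G)\qquad(G\subset Y),
\]
valid because $f^{-1}$ preserves unions and $\phi(G)=\{f^{-1}(y):y\in G\}$; substituting $G=\phi^{-1}(V)$ and using that $\phi$ is onto yields the dual form $\bigcup V=f^{-1}(\phi^{-1}(V))$ for $V\subset Z$.

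Next I would treat the equivalences. Continuity of $\phi$ means precisely that whenever $\bigcup V$ is open in $X$, the set $\phi^{-1}(V)$ is open in $Y$; rewriting $\bigcup V$ as $f^{-1}(\phi^{-1}(V))$ and using that $G\mapsto\phi(G)$ and $V\mapsto\phi^{-1}(V)$ are inverse bijections between the subsets of $Y$ and of $Z$, this is word for word statement (1), giving (1)$\iff$(2). Continuity of $\phi^{-1}$ means that for every open $G\subset Y$ the set $\phi(G)$ lies in $\mathcal G$, i.e.\ $f^{-1}(G)=\bigcup\phi(G)$ is open in $X$ --- statement (3) verbatim --- giving (3)$\iff$(4). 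Since $\phi$ is a bijection, it is a homeomorphism iff it and its inverse are continuous, i.e.\ iff (1) and (3) both hold; and the conjunction of (1) and (3) is by definition the statement that the surjection $f$ is continuous and quotient.

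Finally, for the last sentence I would spell out that ``$X$ has a quotient homeomorphic to $[0,1]$'' means there is an equivalence relation on $X$ whose quotient space, with the quotient topology, is homeomorphic to $[0,1]$. Given a continuous quotient surjection $f\colon X\to[0,1]$, the partition of $X$ into fibers of $f$ is such a relation, its quotient space is $(Z,\mathcal G)$, and by what was just proved $\phi$ is a homeomorphism of $[0,1]$ onto it; conversely, composing a homeomorphism $h\colon X/{\sim}\to[0,1]$ with the canonical projection $X\to X/{\sim}$ --- itself a continuous quotient surjection --- produces a continuous quotient surjection of $X$ onto $[0,1]$. I expect no real obstacle here; the only thing calling for care is keeping the three maps $f\colon X\to Y$, $\phi\colon Y\to Z$ and their composite straight and invoking the union-to-preimage identities on the correct argument, so the final write-up should run only a few lines.
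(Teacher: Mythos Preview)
Your proof is correct. The paper actually states this proposition without proof, treating it as an elementary bookkeeping fact, so there is nothing to compare against; your argument via the identity $\bigcup\phi(G)=f^{-1}(G)$ is the natural way to unwind the definitions and would serve perfectly well as the omitted verification.
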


\begin{proposition}\label{closedopenquotient}
If $X$ is compact, $Y$ is Hausdorff, $f\colon X\to Y$ is a continuous surjection, then $f$ is a closed quotient mapping.
\end{proposition}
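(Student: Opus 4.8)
The plan is to split the statement into its two assertions: first that $f$ is a closed mapping, and then that any continuous closed surjection is automatically a quotient mapping, the latter being read off from condition~(1) of Proposition~\ref{quotientfibers} (equivalently, from the usual definition of a quotient map).

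For closedness of $f$, I would start with an arbitrary closed set $C\subseteq X$. Since $X$ is compact and $C$ is closed in $X$, the subspace $C$ is compact; continuity of $f$ then makes $f(C)$ a compact subspace of $Y$; and since $Y$ is Hausdorff, every compact subspace of $Y$ is closed. Hence $f(C)$ is closed in $Y$, so $f$ carries closed sets to closed sets, i.e.\ $f$ is a closed mapping. (Only here are the hypotheses ``$X$ compact'' and ``$Y$ Hausdorff'' used.)

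For the quotient property, I would verify the implication in condition~(1): suppose $G\subseteq Y$ is such that $f^{-1}(G)$ is open in $X$. Then $X\setminus f^{-1}(G)=f^{-1}(Y\setminus G)$ is closed in $X$, so by the closedness just established $f\bigl(f^{-1}(Y\setminus G)\bigr)$ is closed in $Y$; and since $f$ is a surjection, $f\bigl(f^{-1}(Y\setminus G)\bigr)=Y\setminus G$, so $Y\setminus G$ is closed and $G$ is open in $Y$. Together with the continuity of $f$ (which gives the reverse implication $G$ open $\implies f^{-1}(G)$ open), this shows that $G$ is open in $Y$ if and only if $f^{-1}(G)$ is open in $X$, which is exactly the assertion that $f$ is a quotient mapping.

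I do not expect a genuine obstacle here; the entire weight of the argument rests on the two standard facts that a closed subset of a compact space is compact and that a continuous image of a compact set is compact (hence closed, in a Hausdorff space), plus the observation that surjectivity is precisely what licenses cancelling $f\,f^{-1}$ on a set. It is worth flagging explicitly that surjectivity cannot be omitted: without it the equality $f(f^{-1}(Y\setminus G))=Y\setminus G$ breaks, and a continuous closed map onto a Hausdorff space need not be quotient onto $Y$, only onto its image.
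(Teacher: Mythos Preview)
Your proof is correct and follows essentially the same route as the paper's: the closedness argument is identical, and for the quotient part the paper works with saturated open subsets of $X$ (sets $G\subset X$ with $G=f^{-1}(f(G))$) rather than subsets $G\subset Y$ with open preimage, but these two viewpoints are in bijection and the complement-plus-surjectivity step is the same.
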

\begin{proof}
Let $E$ be closed in $X$. Since $X$ is compact, $E$ is compact. Since $f$ is continuous, $f(E)$ is compact. Since $Y$ is Hausdorff, $f(E)$ is closed in $Y$.
Let $G$ be open in $X$ with $G=f^{-1}(f(G))$. Then $Y\setminus f(G)=f(X\setminus G)$ is closed in $Y$. So $f(G)$ is open in $Y$.
\end{proof}

\begin{theorem}
Every nontrivial connected compact Hausdorff space has a quotient homeomorphic to an arc.
\end{theorem}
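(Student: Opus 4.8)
The plan is to reduce everything to producing a single continuous surjection $f\colon X\to[0,1]$ and then to harvest the quotient property for free from the two propositions just established. Concretely, once I have a continuous surjection $f\colon X\to[0,1]$, Proposition~\ref{closedopenquotient} applies (with $Y=[0,1]$, which is Hausdorff, and $X$ compact) to give that $f$ is a closed quotient map; Proposition~\ref{quotientfibers} then says that the fibers of $f$, topologized as a quotient of $X$, form a space homeomorphic to $[0,1]$, which is exactly the conclusion. So the entire burden is to exhibit \emph{some} continuous surjection from $X$ onto the unit interval.

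To build that surjection I would argue exactly as in the introductory discussion. First, recall the standard fact that every compact Hausdorff space is normal. Since $X$ is nontrivial it has two distinct points $a,b$, and normality lets me invoke Urysohn's lemma to obtain a continuous $f\colon X\to[0,1]$ with $f(a)=0$ and $f(b)=1$. Now use connectedness: $f(X)$ is a connected subset of $[0,1]$ containing both endpoints, hence $f(X)=[0,1]$, so $f$ is onto. Feeding this $f$ into the reduction of the previous paragraph completes the proof, and moreover it places $a$ and $b$ in distinct fibers, matching the ``generalized arcwise connectedness'' remark from the introduction.

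The honest assessment is that this statement has essentially no hard part --- it is the motivating example of the paper written up carefully. The only two points that deserve a line each are the invocation that compactness plus Hausdorffness yields normality (needed to even apply Urysohn's lemma) and the observation that ``nontrivial'' is used precisely to guarantee a pair of distinct points on which to separate. If anything, the mild subtlety is bookkeeping: making sure the hypotheses of Proposition~\ref{closedopenquotient} are literally met (continuity, surjectivity, compact domain, Hausdorff codomain) so that the ``closed quotient'' conclusion is legitimate, and then that Proposition~\ref{quotientfibers} is being applied in the direction ``continuous quotient surjection onto $[0,1]$ $\Rightarrow$ quotient homeomorphic to $[0,1]$.'' Everything else is immediate.
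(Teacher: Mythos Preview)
Your proposal is correct and follows exactly the same route as the paper's proof: use compact Hausdorff $\Rightarrow$ normal, apply Urysohn's lemma to two distinct points, use connectedness to get surjectivity onto $[0,1]$, then invoke Proposition~\ref{closedopenquotient} for the quotient property and Proposition~\ref{quotientfibers} for the homeomorphism of the fiber space with $[0,1]$. There is nothing to add or correct.
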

\begin{proof}
We have $a,b\in X,a\not=b$.
$X$ is normal being compact Hausdorff.
By Urysohn's lemma there is a continuous function $f\colon X\to[0,1]$ such that $f(a)=0$ and $f(b)=1$.
Since $X$ is connected, $f(X)=[0,1]$ .
Let $X/f$ be the quotient space formed by the fibers of $f$.
By Proposition \ref{closedopenquotient}, $f$ is quotient.
By Theorem \ref{quotientfibers}, $X/f$ is homeomorphic to $[0,1]$.
\end{proof}

\begin{theorem}\label{locallyconnected}
If $X$ is a connected, locally connected space,
$f\colon X\to[0,1]$ is a continuous surjection, then $f$ is quotient.
\end{theorem}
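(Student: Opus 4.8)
The plan is to verify the quotient condition of Proposition~\ref{quotientfibers}(1) head on. Since $f$ is continuous, preimages of open sets are open, so it suffices to prove the converse: whenever $G\subseteq[0,1]$ satisfies that $U:=f^{-1}(G)$ is open in $X$, then $G$ is open in $[0,1]$. I would argue by contradiction. Suppose $U$ is open but $G$ is not, and fix a point $t_0\in G$ that is not interior to $G$. Writing $F:=[0,1]\setminus G$, non-interiority means every neighbourhood of $t_0$ meets $F$, hence $F$ accumulates at $t_0$ from the left or from the right; by the left--right symmetry of the setup, assume there are points $s_n\in F$ with $s_n\nearrow t_0$ (note this forces $t_0>0$), the other case being handled by the mirror-image argument with $[0,t_0]$ and $[t_0,1]$ interchanged.

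The one place local connectedness enters is the fact that the connected components of the open set $U$ are themselves open in $X$. Let $C$ be any such component. Then $f(C)$ is a connected subset of $[0,1]$, i.e.\ an interval, and $f(C)\subseteq f(U)=G$. Consequently, if $t_0\in f(C)$, the interval $f(C)$ cannot contain any point $c<t_0$: otherwise $[c,t_0]\subseteq f(C)\subseteq G$, yet some $s_n$ lies in $(c,t_0)$ and $s_n\notin G$. Therefore every component $C$ of $U$ with $t_0\in f(C)$ satisfies $f(C)\subseteq[t_0,1]$.

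From this I extract a clopen set. For each $y\in f^{-1}(t_0)$ (which lies in $U$, as $t_0\in G$), let $C_y$ be its component in $U$; then $f(C_y)\subseteq[t_0,1]$, so $C_y\cap f^{-1}([0,t_0])\subseteq f^{-1}(t_0)$, and $C_y$ is an open neighbourhood of $y$. Hence $f^{-1}(t_0)$ is relatively open in $f^{-1}([0,t_0])$; it is also closed in $X$, so it is clopen in the closed subspace $f^{-1}([0,t_0])$, and therefore so is its complement there, $Q:=f^{-1}([0,t_0))$. Being closed in the closed set $f^{-1}([0,t_0])$, the set $Q$ is closed in $X$; being $X\setminus f^{-1}([t_0,1])$, it is also open in $X$. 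Finally $Q\neq\emptyset$ because $t_0>0$ and $f$ is onto (so $f^{-1}(0)\subseteq Q$), and $Q\neq X$ because $f^{-1}(1)\subseteq X\setminus Q$. Thus $X$ has a nontrivial clopen subset, contradicting connectedness. Hence $G$ is open, $f$ satisfies Proposition~\ref{quotientfibers}(1), and $f$ is quotient; adding normality and the Hausdorff property, $X$ then collapses to an arc exactly as in the introductory argument.

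I expect the only delicate point to be the bookkeeping around the ``one-sided'' structure: recognizing that failure of interiority always yields accumulation of $F$ on at least one fixed side, then being careful that the extracted clopen set $Q$ is \emph{both} nonempty and proper --- which is precisely where surjectivity of $f$ and the inequality $t_0>0$ get used, and which also makes the boundary values $t_0\in\{0,1\}$ fall into line rather than needing separate treatment. Everything beyond ``components of open sets are open'' is pure point-set manipulation of preimages of subintervals of $[0,1]$.
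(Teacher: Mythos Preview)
Your argument is correct and takes a genuinely different route from the paper's. The paper proceeds directly: given $y\in f(G)$ (with $G\subset X$ saturated open), it uses connectedness of $X$ to find a point $u\in\clo{f^{-1}([0,y))}\setminus f^{-1}([0,y))$, so that $f(u)=y$ and $u\in G$; local connectedness then supplies a connected open $U$ with $u\in U\subset G$, and connectedness of $U$ is used to exhibit an explicit half-interval $(r,y]\subset f(U)\subset f(G)$, with a symmetric argument on the other side. You instead argue by contradiction: local connectedness gives open components of $f^{-1}(G)$, the one-sided accumulation of $F$ forces every such component through $f^{-1}(t_0)$ to map into $[t_0,1]$, and this is repackaged as the statement that $f^{-1}([0,t_0))$ is clopen, nonempty, and proper. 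Your version makes the role of connectedness of $X$ maximally transparent (its failure would literally produce a disconnection), and handles the fibre $f^{-1}(t_0)$ all at once rather than point by point; the paper's version is more constructive, actually naming an interval inside $f(G)$. Both rest on the same core observation that continuous images of connected open pieces are intervals.
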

\begin{proof}
For each $r\in(0,1]$ let $B_r=f^{-1}([0,r))$ and $\clo{B}_r=f^{-1}([0,r])$.
We have $a,b\in X$ with $f(a)=0$ and $f(b)=1$.
Let $G$ be open in $X$ with $G=f^{-1}(f(G))$.
To show that $f(G)$ is open in $[0,1]$ take any $y\in f(G)$.
Since $X$ is connected, the open set $B_y$ cannot be closed because $a\in B_y$ and $b\not\in B_y$.
We get $u\in\clo{B_y}\setminus B_y$.
It follows that $f(u)=y\in f(G)$, so $u\in f^{-1}(f(G))\subset G$.
Since $X$ is locally connected, we get an open connected subset $U\subset X$ such that $u\in U\subset G$.
There is a point $x\in U\cap B_y$.
Let $r=f(x)$. We claim that $(r,y]\subset f(U)$.
Indeed, take any $\epsilon\in(r,y)$.
Then the set $U=(U\cap\clo{B}_\epsilon)\cup(U\setminus{B}_\epsilon)$
is a union of two relatively closed subsets.
They are both nonempty because $x\in U\cap\clo{B}_\epsilon$
and $u\in U\setminus{B}_\epsilon$.
Since $U$ is connected, we get $v\in U\cap\clo{B}_\epsilon\setminus{B}_\epsilon$,
so $f(v)=\epsilon$ and $v\in U$. Thus $\epsilon\in f(U)$.
It follows that $(r,y]\subset f(U)\subset f(G)$.
If $y=1$ the proof is finished. Otherwise, argue similarly starting with the observation that
$\{x\in X\colon f(x)>y\}$ cannot be closed to conclude that $[y,y+\delta)\subset f(G)$ for some $\delta>0$.
\end{proof}

\begin{theorem}
Let $X$ be a nontrivial connected, locally connected normal space.
Then $X$ has a quotient homeomorphic to an arc.
\end{theorem}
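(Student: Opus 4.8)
The plan is to mimic the compact case treated earlier in the paper verbatim, replacing the use of Proposition~\ref{closedopenquotient} by Theorem~\ref{locallyconnected}. First I would fix two distinct points $a,b\in X$, which exist because $X$ is nontrivial. To apply Urysohn's lemma I need $\{a\}$ and $\{b\}$ to be disjoint nonempty closed subsets of $X$; this is the one spot where I would use that $X$ is $T_1$ (implied by the Hausdorff hypothesis running through the paper), so that singletons are closed. Urysohn's lemma, available since $X$ is normal, then produces a continuous $f\colon X\to[0,1]$ with $f(a)=0$ and $f(b)=1$.

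Next I would observe that $f(X)$ is a connected subset of $[0,1]$ containing both endpoints, hence $f(X)=[0,1]$, so $f$ is a continuous surjection onto the arc. Now Theorem~\ref{locallyconnected} applies immediately, since $X$ is connected and locally connected, and yields that $f$ is a quotient map. Finally, letting $X/f$ denote the decomposition of $X$ into the fibers of $f$, the equivalence in Proposition~\ref{quotientfibers} (that $\phi$ is a homeomorphism precisely when $f$ is a continuous quotient surjection) identifies $X/f$ with $[0,1]$. Thus $X$ has a quotient homeomorphic to an arc, completing the proof.

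The honest assessment is that no real obstacle remains here: all of the substance was spent in proving Theorem~\ref{locallyconnected}, and the present statement is its direct corollary once Urysohn's lemma is in hand. The only point deserving a moment's care is the separation hypothesis feeding Urysohn's lemma — one must ensure the standing assumptions guarantee that two distinct points lie in disjoint closed sets (equivalently, that $X$ is $T_1$); under the paper's ambient Hausdorff assumption this is automatic, and in fact $T_1$ alone would suffice.
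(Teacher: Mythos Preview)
Your proposal is correct and mirrors the paper's proof exactly: pick two points, apply Urysohn's lemma to get $f\colon X\to[0,1]$, note surjectivity by connectedness, invoke Theorem~\ref{locallyconnected} for quotientness, and finish with Proposition~\ref{quotientfibers}. Your added remark about needing $T_1$ so that singletons are closed is a valid point of care that the paper leaves implicit.
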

\begin{proof}
We have two distinct points $a,b\in X$. By Urysohn's lemma there is a continuous surjection $f\colon X\to[0,1]$ with $f(a)=0$ and $f(b)=1$. By Theorem \ref{locallyconnected}, $f$ is quotient.
By Theorem \ref{quotientfibers}, the quotient space formed by the fibers of $f$ is homeomorphic to $[0,1]$.
\end{proof}

\begin{corollary}[cf. \cite{continuityconnected}]
If $Y$ is a topological space and $f\colon[0,1]\to Y$ has a connected, locally connected graph, then $f$ is continuous.
\end{corollary}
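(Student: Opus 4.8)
The plan is to realize $f$ as the second coordinate of the inverse of the first-coordinate projection restricted to the graph, and to use Theorem~\ref{locallyconnected} to show that this projection is a homeomorphism.

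Write $\Gamma=\{(x,f(x)):x\in[0,1]\}\subset[0,1]\times Y$ with the subspace topology; by hypothesis $\Gamma$ is connected and locally connected. Let $p\colon\Gamma\to[0,1]$ be the restriction to $\Gamma$ of the first-coordinate projection. Since $f$ is a function with domain all of $[0,1]$, the map $p$ is a continuous bijection, and in particular a continuous surjection onto $[0,1]$. First I would apply Theorem~\ref{locallyconnected} with $X=\Gamma$ to conclude that $p$ is a quotient map.

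Next I would observe that a continuous bijective quotient map is a homeomorphism: if $U\subset\Gamma$ is open, then $p^{-1}(p(U))=U$ is open because $p$ is injective, so $p(U)$ is open by the quotient property; thus $p$ is open, hence a homeomorphism. Consequently $p^{-1}\colon[0,1]\to\Gamma$, given by $x\mapsto(x,f(x))$, is continuous. Composing with the (continuous) second-coordinate projection $\pi_Y\colon[0,1]\times Y\to Y$ gives $f=\pi_Y\circ p^{-1}$, so $f$ is continuous.

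I do not expect a serious obstacle: once Theorem~\ref{locallyconnected} is in hand the argument is essentially formal. The points that deserve care are the elementary reduction ``continuous bijective quotient map $\Rightarrow$ homeomorphism'' (which could alternatively be read off from the homeomorphism criterion in Proposition~\ref{quotientfibers}), and the recognition that the target being exactly $[0,1]$ is what permits the invocation of Theorem~\ref{locallyconnected} — it is the local connectedness of the \emph{graph}, not of the domain interval, that does the work.
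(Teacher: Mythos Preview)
Your argument is correct and is essentially identical to the paper's own proof: the paper also takes the first-coordinate projection $\phi$ from the graph onto $[0,1]$, applies Theorem~\ref{locallyconnected} to conclude it is quotient, and then uses that a bijective quotient map is open to deduce continuity of $f$. Your write-up is slightly more explicit about the final step (composing with $\pi_Y$), but the route is the same.
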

\begin{proof}
Let $X$ be the graph of $f$ with the induced topology from $[0,1]\times Y$.
By assumption, $X$ is a connected locally connected space.
Then $\phi\colon X\to[0,1]$ given by $\phi(x,f(x))=x$ is a continuous surjection.
By Theorem \ref{locallyconnected}, $\phi$ is quotient.
Since $\phi$ is a bijection, it is an open mapping and thus $f$ is continuous.
\end{proof}

If $g\colon[0,1]\to Y$ is discontinuous with a connected graph then the graph cannot be locally connected. Let $X$ be the graph of $g$ and let $f\colon X\to[0,1]$ be given by $f(x,g(x))=x$. Then $f\colon X\to[0,1]$ is a continuous surjection that is not quotient because $g$ is discontinuous. Such examples show that the assumption that $X$ is locally connected cannot be dropped in Theorem \ref{locallyconnected}.

Let $(X,d)$ be a metric space, $a\in X$.
Let $f\colon X\to\R$ be given by $f(x)=d(x,a)$.
We already know that if $X$ is compact then $f$ is quotient.
However, $f$ need not be open even if $X$ is compact.
Moreover, $f$ need not be quotient if $X$ fails to be connected.
Indeed, let $X=\{(x,y)\in\R^2\colon x^2+y^2=1\vee(y=0\wedge 0\le x\le 1\}$ with the euclidean metric.
Let $a=(0,0)$ and let $G=B_d(a,1)$. Then $f(G)=\{1\}$, which is not open in $f(X)=[0,1]$.
Let us consider $Y=X\setminus\{(1,0)\}$ with the same metric. The space $Y$ is no longer connected and the function $f$ is not even quotient because $G=\{x\in Y\colon d(x,a)=1\}$ is now open in $Y$ and $G=f^{-1}(f(G))$ but $f(G)=\{1\}$ is not open in $f(Y)=[0,1]$.

\begin{example}
A connected metric space $(X,d)$, $a\in X$, $f\colon X\to[0,1]$ given by $f(x)=d(x,a)$ such that $f$ is not quotient.
\end{example}
\begin{proof}
Let $g\colon(0,1]\to[-1,1]$ be discontinuous with a connected graph.
Let $X=\{(0,0)\}\cup\{(x,xg(x))\colon x\in(0,1]\}$.
Let $d((x,y),(s,t))=\max\{|x-s|,|y-t|\}$.
Let $f\colon X\to[0,1]$ be given by $f(x)=d(x,(0,0))$.
Then $f((x,xg(x))=x$ for $x\in(0,1]$.
If $f$ were quotient, its inverse $f^{-1}$
would be continuous and then the discontinuous function $g(x)=x^{-1}\pi(f^{-1}(x))$ would be a composition of continuous functions, $\pi(x,y)=y$.
\end{proof}

\begin{example}
A connected locally compact metric space $(X,d)$, $a\in X$, $f\colon X\to[0,1]$ given by $f(x)=d(x,a)$ such that $f$ is not quotient.
\end{example}
\begin{proof}
Let $X=[0,2-1/\pi]\times\{0\}\cup\{(x,\sin(1/(2-x))\colon x\in[2-1/\pi,2)\}\cup\{2\}\times(-1,1)$.
Let $d((x,y),(s,t))=\max\{|x-s|,|y-t|\}$.
Let $f\colon X\to[0,1]$ be given by $f(x)=d(x,(0,0))$.
Then $f((x,y))=x$ for $x\in(0,1]$.
Let $G=[0,2]\times[-1,1)$. Then $G$ is open and $G=f^{-1}(f(G))$.
Since $H=\{x\in(2-1/\pi,2)\colon\sin(1/(2-x))=1\}$ is countable with $2\in\clo H$,
$f(G)=[0,2]\setminus H$ is not open in $[0,2]=f(X)$.
\end{proof}

\begin{theorem}
If $X$ is a normal space with a closed subspace that has a quotient homeomorphic to an arc, then $X$ has a quotient homeomorphic to an arc.
\end{theorem}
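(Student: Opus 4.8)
The plan is to reduce the statement to the unit-interval reformulation supplied by Proposition~\ref{quotientfibers} and then invoke the Tietze extension theorem. Write $C\subset X$ for the closed subspace that collapses to an arc. By the last sentence of Proposition~\ref{quotientfibers}, the hypothesis on $C$ means precisely that there is a continuous quotient surjection $g\colon C\to[0,1]$. Since $X$ is normal and $C$ is closed in $X$, the Tietze extension theorem yields a continuous map $f\colon X\to[0,1]$ with $f|_C=g$; one uses the $[0,1]$-valued form of Tietze directly, or extends to a real-valued function and composes with a retraction of $\R$ onto $[0,1]$. Because $g$ is already onto, $f$ is a continuous surjection of $X$ onto $[0,1]$, so by Proposition~\ref{quotientfibers} it will be enough to check that $f$ is a quotient map.

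To see that, I would take $G\subset[0,1]$ with $f^{-1}(G)$ open in $X$ and show that $G$ is open in $[0,1]$. The key point is the identity $f^{-1}(G)\cap C=\{x\in C\colon g(x)\in G\}=g^{-1}(G)$. Since $f^{-1}(G)$ is open in $X$, its trace on $C$ is open in the subspace topology of $C$; hence $g^{-1}(G)$ is open in $C$, and because $g$ is quotient, $G$ is open in $[0,1]$. The reverse implication, that $G$ open in $[0,1]$ forces $f^{-1}(G)$ open in $X$, is just the continuity of $f$. Thus $f$ is a continuous quotient surjection onto the unit interval, and Proposition~\ref{quotientfibers} gives that the fibers of $f$ form a quotient of $X$ homeomorphic to $[0,1]$.

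The step demanding the most attention is the verification that $f$ stays quotient after the domain is enlarged from $C$ to $X$, but this turns out to present no real obstacle: openness of a set $G\subset[0,1]$ can already be tested against $g^{-1}(G)=f^{-1}(G)\cap C$, whose openness in $C$ is forced by the openness of $f^{-1}(G)$ in $X$. The only genuinely non-elementary ingredient is the Tietze extension theorem, and its hypotheses ($X$ normal, $C$ closed, target a closed interval) are exactly what is available. As a side benefit, this argument formalizes the remark in the introduction that a counterexample to the conjecture cannot contain an infinite connected compact subspace: in the Hausdorff setting such a subspace is automatically closed and collapses to an arc by the compact case already proved, so its quotient Urysohn function extends to a quotient Urysohn function on the whole space.
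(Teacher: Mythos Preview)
Your proof is correct and follows essentially the same approach as the paper: extend the quotient surjection $g\colon C\to[0,1]$ to $f\colon X\to[0,1]$ via Tietze, then verify that $f$ is quotient by restricting back to $C$ and using that $g$ is quotient. The only cosmetic difference is that the paper checks the quotient property by starting with a saturated open set $G\subset X$ and showing $f(G)=g(G\cap C)$ is open, whereas you start with $G\subset[0,1]$ having open $f$-preimage and read off directly that $g^{-1}(G)=f^{-1}(G)\cap C$ is open in $C$; your formulation is if anything a bit cleaner.
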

\begin{proof}
By assumption, there is a closed subspace $A\subset X$ and a continuous quotient surjection $g\colon A\to[0,1]$.
Since $X$ is normal, there is a continuous extension $f\colon X\to[0,1]$, $f|A=g$. We need to show that $f$ is quotient. Indeed, let $G$ be open in $X$ with $G=f^{-1}(f(G))$.
Then $f(G)=g(G\cap A)$. Indeed, let $y\in f(G)$. Since $g$ is surjective, $g(a)=y$ for some $a\in A$.
Then $f(a)=y$ and so $a\in G$. Thus $y\in g(G\cap A)$.
We still need to show that $g^{-1}g(G\cap A))=G\cap A$.
Indeed, let $a\in g^{-1}g(G\cap A))$. Then $a\in A$, $g(a)\in g(G\cap A)$ and $f(a)\in f(G)$ and thus $a\in G$.
Since $g$ is quotient, $f(G)=g(G\cap A)$ is open in $[0,1]$.
\end{proof}

\begin{corollary}
An infinite connected normal Hausdorff space with no quotient homeomorphic to an arc has to be punctiform and not locally connected.
\end{corollary}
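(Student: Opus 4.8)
The plan is to establish the two conclusions --- that $X$ must be punctiform, and that $X$ must fail to be locally connected --- separately, each by contraposition against one of the theorems already proved in this section. So let $X$ be an infinite connected normal Hausdorff space, assume $X$ has no quotient homeomorphic to an arc, and derive from each of the two forbidden properties a contradiction with this assumption.

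For local connectedness: since $X$ is infinite it has at least two points, hence is nontrivial, and it is connected and normal by hypothesis; if it were in addition locally connected, then the theorem on nontrivial connected, locally connected normal spaces would apply verbatim and produce a quotient homeomorphic to an arc. Hence $X$ cannot be locally connected.

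For punctiformness, read as: $X$ contains no connected compact infinite subset. Suppose $A\subset X$ is connected, compact and infinite. As a subspace of a Hausdorff space, $A$ is Hausdorff and is closed in $X$; it inherits compactness and connectedness, and being infinite it is nontrivial. The theorem that every nontrivial connected compact Hausdorff space has a quotient homeomorphic to an arc therefore yields a continuous quotient surjection $A\to[0,1]$. Now the extension theorem applies: $X$ is normal and $A$ is a closed subspace having a quotient homeomorphic to an arc, so $X$ itself has a quotient homeomorphic to an arc --- contradicting the assumption. Thus $X$ is punctiform.

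I do not expect a genuine obstacle here: the corollary is a bookkeeping consequence of the four theorems of this section. The only point demanding a moment's attention is checking that the hypothesized subspace $A$ is a legitimate input to the compact-Hausdorff theorem and to the extension theorem --- that compactness and connectedness pass to the subspace topology, that an infinite connected Hausdorff space is nondegenerate, and that a compact subset of a Hausdorff space is closed. With those routine observations in place, both implications are immediate citations.
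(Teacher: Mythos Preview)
Your proof is correct and matches the paper's intended argument: the corollary is stated without proof in the paper, but the reasoning you give --- contraposition against the locally connected theorem for one conclusion, and combining the compact Hausdorff theorem with the closed-subspace extension theorem for the other --- is exactly what the paper sketches in its introduction. Nothing is missing.
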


The simplest examples of connected, not locally connected punctiform spaces are supplied by functions $f\colon\R\to\R$ with connected graphs with a dense set of discontinuity points. We show that any function with a connected graph lying on the plane collapses to an arc.

\begin{theorem}\label{spojnywykresodpada}
If $g\colon[0,1]\to[0,1]$ is a surjection with a connected graph, then $f(x,g(x))=g(x)$ is a continuous quotient surjection mapping the graph of $g$ onto $[0,1]$.
\end{theorem}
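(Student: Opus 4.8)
The plan is to show directly that $f(x,g(x)) = g(x)$, viewed as a map from the graph $X = \{(x,g(x)) : x \in [0,1]\}$ onto $[0,1]$, is a continuous quotient surjection. Continuity and surjectivity are immediate: $f$ is the restriction to $X$ of the continuous projection $\pi_2(x,y) = y$ on $[0,1]^2$, and $f$ is onto because $g$ is. So the whole content is quotientness. Following the pattern of the earlier arguments, I would take an open set $G \subset X$ with $G = f^{-1}(f(G))$ and a point $y_0 \in f(G)$, and try to produce an entire neighborhood of $y_0$ inside $f(G)$. Unlike Theorem~\ref{locallyconnected}, I cannot invoke local connectedness, so the connectedness of $X$ (equivalently, of the graph of $g$) must be used more cleverly, together with the specific fact that the second coordinate — not the first — is the function value being collapsed.

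The key maneuver I expect to use is this. Fix $y_0 \in f(G)$, so there is $x_0$ with $(x_0, g(x_0)) \in G$ and $g(x_0) = y_0$. Suppose, for contradiction, that $f(G)$ is not a neighborhood of $y_0$; then there is a sequence of values $y_n \to y_0$ with $y_n \notin f(G)$. Since $G = f^{-1}(f(G))$, this means the \emph{entire} horizontal line at height $y_n$ misses $G$: for every $x$ with $g(x) = y_n$, the point $(x, y_n) \notin G$. Now consider the decomposition of $X$ by the horizontal strips $\{y \le c\}$ and $\{y \ge c\}$. The idea is to choose a level $c$ strictly between $y_0$ and some $y_n$ so that the horizontal line $\{y = c\}$ lies outside $G$ as well — or, failing that, to exploit that the graph, being connected, must "cross" every horizontal level between two attained heights, producing points of the graph at height $c$ that, by the $f^{-1}(f(G))$-saturation, are either all in $G$ or all out of $G$. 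Running this against the openness of $G$ around $(x_0, y_0)$ should force a contradiction. Concretely: connectedness of the graph implies that for the relatively clopen-looking pieces $X \cap \{y \le c\}$ and $X \cap \{y \ge c\}$ to fail to disconnect $X$, their intersection $X \cap \{y = c\}$ must be nonempty for every $c$ between the infimum and supremum of $g$, i.e. for every $c \in (0,1)$ since $g$ is onto. This is the "intermediate value" mechanism already used twice above, now applied in the $y$-direction.

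With that in hand, I would argue as follows. Because $G$ is open in $X$ and contains $(x_0, y_0)$, there is a basic neighborhood $(x_0 - \delta, x_0 + \delta) \times (y_0 - \delta, y_0 + \delta)$ whose trace on $X$ lies in $G$. Pick $y_n \in (y_0 - \delta, y_0)$ (or on the other side) with $y_n \notin f(G)$, and pick a level $c$ with $y_n < c < y_0$, $c \in (y_0 - \delta, y_0)$. By the intermediate-value observation there is a point $(x', c) \in X$; I want to show $(x', c) \in G$ and also exhibit a point at height $y_n$ forced into $G$, contradicting $y_n \notin f(G)$. The mechanism for transporting membership in $G$ from one horizontal level to a nearby one is exactly the connectedness argument of Theorem~\ref{locallyconnected} transplanted to the component of $G$ (not of $X$) containing $(x_0,y_0)$: one shows that this component, intersected with the closed strip $X \cap \{y \le c\}$ and with $X \setminus \{y < c\}$, gives a splitting into two relatively closed sets, one of which must be empty or must meet the level $\{y = c\}$ inside $G$. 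Iterating this pushes $G$ down (or up) through a whole interval of heights, so $f(G) \supset (y_0 - \delta', y_0]$ or $[y_0, y_0 + \delta')$ for some $\delta' > 0$; combined with the symmetric argument on the other side one gets a full neighborhood of $y_0$, and since $y_n$ lies in it, $y_n \in f(G)$ — the desired contradiction. Hence $f(G)$ is open and $f$ is quotient.

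The main obstacle, I expect, is that $G$ itself need not be connected or locally connected, so I cannot directly run the Theorem~\ref{locallyconnected} argument inside $G$; I have to work with the connectedness of the ambient graph $X$ and the saturation condition $G = f^{-1}(f(G))$ simultaneously. The delicate point is ensuring that when the intermediate-value argument produces a point of $X$ at an intermediate height $c$, that point actually lies in $G$ and not merely in $X$ — this is where the interplay between "$G$ open in $X$", "$G$ saturated", and "the graph is connected hence meets every horizontal level" has to be orchestrated carefully. If a clean component-chasing argument does not close, the fallback is to argue by contradiction on the supremum of the set of heights $c \le y_0$ for which the horizontal level $\{y = c\}$ meets $G$, showing this supremum cannot be strictly below $y_0$ without violating either openness of $G$ at $(x_0, y_0)$ or connectedness of the graph.
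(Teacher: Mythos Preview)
Your proposal has the right overall shape --- take $G$ open and saturated, fix $y_0\in f(G)$, and try to show $f(G)$ is a neighborhood of $y_0$ --- but it does not close, and it misses the two specific moves that make the paper's argument work. First, you start from an \emph{arbitrary} $(x_0,y_0)\in G$ with $g(x_0)=y_0$. The paper does not: it sets $E=X\cap([0,1]\times(y_0,1])$, notes that $E$ is open, nonempty, and not all of $X$, hence (by connectedness of $X$) not closed, and then picks $(x,y_0)\in\overline{E}\setminus E$. By saturation this point lies in $G$, and the choice guarantees that every basic rectangle $U\times V$ about it already contains a graph point $(u,g(u))$ with $g(u)>y_0$. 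Your arbitrary $(x_0,y_0)$ carries no such guarantee: the trace of $X$ on a small box around it may lie entirely at heights $\le y_0$, and then you have nothing to push against.

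Second, and more seriously, once the paper has $(u,g(u))\in U\times V$ with $g(u)>y_0$, it invokes the \emph{Darboux property} of $g$ (connected graph $\Rightarrow$ intermediate-value property), so $g(U)$ is an interval and $[y_0,g(u)]\subset g(U)\cap V\subset f(G)$. You never use Darboux; instead you try to run a connectedness argument inside the component of $G$ containing $(x_0,y_0)$. But $X$ is not locally connected, so that component can be the singleton $\{(x_0,y_0)\}$, and your splitting into $\{y\le c\}$ and $\{y\ge c\}$ yields nothing. Your own caveat --- that the delicate point is showing the intermediate-level point lies in $G$ rather than merely in $X$ --- is precisely where the argument is circular: by saturation, $(x',c)\in G$ iff $c\in f(G)$, which is what you are trying to prove. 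The Darboux step is exactly what substitutes for the missing local connectedness: it produces, for free, graph points at every height in $[y_0,g(u)]$ \emph{with first coordinate in $U$}, hence inside the box $U\times V\subset G$.
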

\begin{proof}
Let $G$ be an open subset of the graph of $g$ such that $f^{-1}(f(G))=G$.
Take any $y\in f(G)$, $y<1$.
Then the open subset of the graph $E=g\cap([0,1]\times(y,1])$, being neither empty nor the whole graph, cannot be a closed subset of the connected graph.
Thus we get $(x,g(x))\in\clo E\setminus E$. It follows that $g(x)=y$ and so $(x,y)\in G$.
Since $G$ is open in $g$, we get an open rectangle  $U\times V$ with $(x,y)\in(U\times V)\cap g\subset G$.
Since $(x,y)\in\clo E$, we get $(u,g(u))\in U\times V$ with $g(u)>y$.
Now, since $g$ is Darboux, both sets $g(U)$ and $V$ are intervals, and thus $[y,g(u)]\subset g(U)\cap V\subset f(G)$.
By symmetry, if $y\in f(G), y>0$, then $[a,y]\subset f(G)$ for some $a\in(0,y)$.
Thus $f(G)$ is open in $[0,1]$.
\end{proof}

\begin{example}
A function $g\colon[0,1]\to[0,1]^2$ with a connected graph such that $f(x,g(x))=f(x,(g_1(x),g_2(x)))=g_1(x)$ is not quotient.
\end{example}
\begin{proof}
Let $g(x)=(x,\sin(1/x))$ for $x\in(0,1]$ and $g(0)=(0,0)$.
That the function $f(0,(0,0))=0$ and $f(x,(x,\sin(1/x)))=x$ for $x\in(0,1]$ which maps the graph of $g$ onto $[0,1]$ is quotient is equivalent to claiming that its inverse given by  $f^{-1}(x)=(x,(x,\sin(1/x)))$ for $x\in(0,1]$ and $f^{-1}(0)=(0,(0,0))$ is continuous.
\end{proof}

\begin{theorem}
For any two distinct points of a connected graph lying on the plane there is a quotient Urysohn function.
\end{theorem}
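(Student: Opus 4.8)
The plan is to exhibit an explicit quotient Urysohn function of the form $f(x,y)=\chi\bigl(y+\rho(x)\bigr)$ restricted to the graph, where $\rho\colon\R\to\R$ is a fixed continuous (for instance linear) function and $\chi\colon\R\to[0,1]$ is a fixed continuous piecewise‑affine function, and then to check that it is quotient by the method of Theorem~\ref{spojnywykresodpada}. Write $\Gamma\subset\R^2$ for the connected graph of $g$; then the domain $X=\pi_1(\Gamma)$ and the range $g(X)=\pi_2(\Gamma)$ are intervals, being continuous images of the connected set $\Gamma$. Let $p=(a,g(a))$ and $q=(b,g(b))$ be the two given points, so $a\ne b$. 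The first step I would carry out is that $g$ is Darboux: otherwise there are $c<d$ in $X$ and a value $y_0$ strictly between $g(c)$ and $g(d)$ with $y_0\notin g([c,d])$, and then $\Gamma\cap([c,d]\times(-\infty,y_0))$ and $\Gamma\cap([c,d]\times(y_0,\infty))$ are relatively clopen in the closed slice $\Gamma\cap([c,d]\times\R)$ and separate $(c,g(c))$ from $(d,g(d))$; adjoining the closed sets $\Gamma\cap((-\infty,c]\times\R)$ and $\Gamma\cap([d,\infty)\times\R)$ to the two pieces on the appropriate sides disconnects $\Gamma$, a contradiction. Hence $g$, and likewise $g+\rho$ for any continuous $\rho$, maps every subinterval of $X$ onto an interval.

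Next I would reduce to the case $g(a)\ne g(b)$. The shear $h(x,y)=(x,y+\rho(x))$ is a homeomorphism of $\R^2$ carrying $\Gamma$ onto the graph of $g+\rho$, again a connected planar graph; choosing $\rho$ continuous with $g(a)+\rho(a)\ne g(b)+\rho(b)$ (take $\rho\equiv 0$ when $g(a)\ne g(b)$ and $\rho(x)=x$ otherwise) and pulling a quotient Urysohn function for $h(p),h(q)$ back through $h$ produces one for $p,q$. So assume $\alpha:=g(a)\ne g(b)=:\beta$, let $I$ be the closed interval with endpoints $\alpha,\beta$, and let $\chi\colon\R\to[0,1]$ be the continuous map that is affine on $I$ with $\chi(\alpha)=0$ and $\chi(\beta)=1$ and constant on each component of $\R\setminus I$. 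Put $f=\chi\circ\pi_2|_\Gamma$. Then $f$ is continuous with $f(p)=0$ and $f(q)=1$, and $f$ is onto $[0,1]$, since $g(X)$ is an interval containing $\alpha$ and $\beta$, hence $I\subseteq g(X)$ and $[0,1]=\chi(I)\subseteq\chi(g(X))\subseteq[0,1]$.

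The heart of the matter is that $f$ is quotient, proved as in Theorem~\ref{spojnywykresodpada}. Let $G$ be open in $\Gamma$ with $f^{-1}(f(G))=G$ and take $t_0\in f(G)$ with $t_0<1$. The set $E=\{z\in\Gamma:f(z)>t_0\}$ is relatively open, nonempty (it contains $q$) and proper (it misses $p$), hence not closed; pick $w=(c,g(c))\in\clo E\setminus E$, so $f(w)=t_0$ by continuity and $w\in f^{-1}(t_0)\subseteq G$. Choose an open box $U\times V$ about $w$ with $(U\times V)\cap\Gamma\subseteq G$, and, since $w\in\clo E$, a point $(x_1,g(x_1))\in(U\times V)\cap\Gamma$ with $f(x_1,g(x_1))>t_0$. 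Now $g(U\cap X)$ is an interval by the Darboux property and $V$ is an interval, so $g(U\cap X)\cap V$ is an interval containing $g(c)$ and $g(x_1)$, on which $\chi$ takes the values $t_0$ and $f(x_1,g(x_1))$; by the intermediate value theorem $\chi$ attains every $s\in[t_0,f(x_1,g(x_1))]$ at some $v$ in that interval. For such $v$ pick $x'\in U\cap X$ with $g(x')=v$; since $v\in V$ we get $(x',v)=(x',g(x'))\in(U\times V)\cap\Gamma\subseteq G$ and $f(x',v)=\chi(v)=s$, so $s\in f(G)$. Thus $f(G)$ contains a right neighbourhood of $t_0$, and the symmetric argument applied to $\{z:f(z)<t_0\}$ when $t_0>0$ gives a left neighbourhood; hence $f(G)$ is open and $f$ is quotient, which is what we want.

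The step I expect to be the main obstacle is this last one, and I would flag it explicitly. It goes through only because $f$ depends on the second coordinate alone, so that a prescribed value $s=\chi(v)$ is realized at the genuine graph point $(x',g(x'))$ lying inside the prescribed box $U\times V$; if one had to use a function of both coordinates — which is unavoidable when the two given points already share a height — the constraint that the realizing point lie in the vertical window $V$ could no longer be met by an intermediate value argument. That is exactly what the shear in the second paragraph is for. The only remaining care is routine bookkeeping about half‑open or unbounded $X$ and $g(X)$, which does not affect the argument.
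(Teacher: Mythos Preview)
Your proof is correct and follows essentially the same route as the paper: shear the graph by $(x,y)\mapsto(x,y+x)$ so that the two prescribed points acquire distinct heights, then use the second-coordinate projection and the Darboux property of $g$ (via the argument of Theorem~\ref{spojnywykresodpada}) to verify the quotient condition. The paper simply cites Theorem~\ref{spojnywykresodpada} for the sheared function $h(x)=g(x)+x$, whereas you unpack that argument with the extra clamping map $\chi$ and supply the Darboux verification explicitly; this makes your version more self-contained and slightly more general (arbitrary interval domain and range), but the underlying idea is identical.
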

\begin{proof}
If $g\colon[0,1]\to[0,1]$ has a connected graph,
then the graph of $h(x)=g(x)+x$ is homeomorphic to the graph of $g$.
Applying Theorem \ref{spojnywykresodpada} to $h$ instead of to $g$ makes sure that two points $(a,g(a))$ and $(b,g(b))$ with $g(a)=g(b)$ will end up in different fibers of the continuous quotient surjection onto a closed interval.
\end{proof}

\begin{theorem}
If a normal Hausdorff space contains an arc then for any two distinct points there is a quotient Urysohn function.
\end{theorem}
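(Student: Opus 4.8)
The plan is to realize the required Urysohn function by enlarging the given arc to a closed compact subspace of $X$ that also contains the two marked points, then building a quotient map on that subspace by hand, and finally extending it with the theorem on closed subspaces proved above. So let $A\subseteq X$ be an arc and let $a,b\in X$ be two distinct points; the goal is a continuous quotient surjection $f\colon X\to[0,1]$ with $f(a)=0$ and $f(b)=1$. Put $K=A\cup\{a,b\}$. Since $A$ is homeomorphic to $[0,1]$ it is compact, hence closed in the Hausdorff space $X$, and $\{a,b\}$ is closed because $X$ is $T_1$; thus $K$ is a closed compact subspace of $X$. Moreover $A$ is clopen in $K$, because its complement in $K$ is finite and hence closed in $K$, and whichever of $a,b$ does not lie on $A$ is an isolated point of $K$, because its complement in $K$ is a union of two closed sets. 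These observations are exactly what will let me prescribe the values at $a$ and $b$.

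Next I would build a continuous surjection $g\colon K\to[0,1]$ with $g(a)=0$ and $g(b)=1$. Fix a homeomorphism $h\colon A\to[0,1]$ and a continuous surjection $\psi\colon[0,1]\to[0,1]$ that takes the value $0$ at $h(a)$ whenever $a\in A$ and the value $1$ at $h(b)$ whenever $b\in A$; a suitable piecewise-linear $\psi$ serves, surjectivity being automatic from the intermediate value theorem once $\psi$ attains the values $0$ and $1$. Set $g|_A=\psi\circ h$ and, if $a\notin A$ (respectively $b\notin A$), additionally put $g(a)=0$ (respectively $g(b)=1$). Then $g$ is well defined and continuous, since the added points are isolated in $K$ and $A$ is clopen in $K$, it is onto because $g|_A$ already is, and $g(a)=0$, $g(b)=1$ by construction. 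Since $K$ is compact and $[0,1]$ is Hausdorff, Proposition~\ref{closedopenquotient} shows $g$ is a closed quotient map, so $K$ collapses to an arc.

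Finally, $X$ is normal and $K\subseteq X$ is closed, so the theorem, proved above, that a normal space possessing a closed subspace that collapses to an arc itself collapses to an arc applies; its proof produces the quotient map as a continuous extension of $g$ furnished by normality, so we obtain a continuous quotient surjection $f\colon X\to[0,1]$ with $f|_K=g$. Then $f(a)=g(a)=0$ and $f(b)=g(b)=1$, so $f$ is a quotient Urysohn function for $a$ and $b$. When $a$ and $b$ already lie on $A$ one may instead take for $g$ a homeomorphism onto $[0,1]$ of the subarc of $A$ joining them, which recovers the introduction's remark that $a$ and $b$ can be placed in distinct fibers.

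There is no real obstacle to this argument; the points that require care are the two topological checks that splice the pieces together — that $g$ is genuinely continuous on $K$, which rests on the isolated-point observation, and that the cited extension theorem indeed returns a continuous extension of $g$ rather than merely some quotient surjection, so that the prescribed boundary values are inherited by $f$. Everything else, including the explicit choice of $\psi$, is routine.
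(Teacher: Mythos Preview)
Your proof is correct and follows essentially the same approach as the paper: form the closed compact set $K=A\cup\{a,b\}$, build a continuous surjection onto $[0,1]$ sending $a\mapsto 0$, $b\mapsto 1$, note it is quotient by compactness, and then invoke the closed-subspace extension theorem so that the extension inherits the prescribed values. The only cosmetic difference is that the paper handles the three cases (both points on the arc, both off, one on and one off) by a short case split, whereas you unify them via the auxiliary map $\psi$; your final remark about taking the subarc when $a,b\in A$ recovers the paper's treatment of that case exactly.
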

\begin{proof}
Let $[a,b]\subset X$ be a homeomorph of $[0,1]$ with a continuous quotient bijection $f\colon[a,b]\to[0,1]$.
Let $x,u\in X$ be any distinct points. If they both lie on the arc, then the proof is easily finished. If both lie outside the arc, then put $f(x)=0$ and $f(u)=1$ and obtain a continuous extension of $f$ to the whole $X$. It will be quotient.
If $x\in[a,b]$ and $u\not\in[a,b]$ then put $f(u)=1-f(x)$ and proceed as before.
\end{proof}

Another well-known example of a connected, not locally connected punctiform space is the Knaster-Kuratowski fan. The following proposition shows that it easily collapses to an arc.

\begin{proposition}
Let $X,Y$ be metrizable spaces.
If $X=S\cup D$ with $S$, $D$ disjoint and dense,
$Y=P\cup Q$ with $P$, $Q$ disjoint and dense,
$Z=(S\times P)\cup( D\times Q)$,
then $f\colon Z\to Y$ given by $f(x,y)=y$ is a continuous open  surjection (and thus quotient).
\end{proposition}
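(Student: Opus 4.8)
The plan is to check the three properties in turn, with essentially all of the content sitting in openness. \emph{Continuity} is immediate: $f$ is the restriction to $Z$ of the coordinate projection $X\times Y\to Y$, which is continuous. For \emph{surjectivity}, note that $S$ and $D$, being dense, are nonempty (in the nondegenerate case); given $y\in Y$, if $y\in P$ pick any $s\in S$ and observe $(s,y)\in S\times P\subseteq Z$ with $f(s,y)=y$, and if $y\in Q$ pick any $d\in D$ and observe $(d,y)\in D\times Q\subseteq Z$ with $f(d,y)=y$.

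For \emph{openness}, I would take an open $W\subseteq Z$, fix $y_0\in f(W)$ together with a witness $(x_0,y_0)\in W$, write $W=\widetilde W\cap Z$ for some $\widetilde W$ open in $X\times Y$, and choose a basic box $U\times V\subseteq\widetilde W$ with $x_0\in U$ and $y_0\in V$. The goal is then to show $V\subseteq f(W)$, which finishes the argument. Given $y\in V$, it suffices to produce some $x\in U$ with $(x,y)\in Z$, because then automatically $(x,y)\in(U\times V)\cap Z\subseteq\widetilde W\cap Z=W$ and $f(x,y)=y$. Here the interlocking decomposition $Z=(S\times P)\cup(D\times Q)$ does exactly the right thing: if $y\in P$, density of $S$ yields $x\in S\cap U$ and $(x,y)\in S\times P\subseteq Z$; if $y\in Q$, density of $D$ yields $x\in D\cap U$ and $(x,y)\in D\times Q\subseteq Z$. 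Since $V=(V\cap P)\cup(V\cap Q)$, every $y\in V$ is covered.

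Finally, a continuous open surjection is a quotient map: if $f^{-1}(G)$ is open, then $G=f\bigl(f^{-1}(G)\bigr)$ is open by surjectivity and openness, which is condition (1) of Proposition~\ref{quotientfibers}, while condition (3) is just continuity, so $f$ is a continuous quotient surjection. I do not anticipate a genuine obstacle here; the single step worth flagging is the matching in the openness argument, where one must invoke density of $S$ precisely against the points of $P$ and density of $D$ precisely against the points of $Q$ — this is exactly the feature the twisted product $Z=(S\times P)\cup(D\times Q)$ is designed to provide. It is also worth remarking that metrizability is never used: the same proof works verbatim for arbitrary topological spaces $X,Y$ each split into two disjoint dense pieces.
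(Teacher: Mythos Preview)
Your proof is correct and shares the essential matching idea with the paper's (pair points of $P$ with points of $S$, points of $Q$ with points of $D$), but the execution differs. The paper argues by contradiction: assuming $f(G)$ is not open it extracts a sequence $y_n\notin f(G)$ converging to some $y\in f(G)$, lifts $y$ to a point $(x,y)\in G$, and then uses balls $B(x,1/n)$ in $X$ together with the density of $S$ and $D$ to choose $x_n$ with $x_n\in S\iff y_n\in P$, so that $(x_n,y_n)\in Z$ converges to $(x,y)\in G$, a contradiction. This explicitly invokes metrizability (or at least first countability) to pass between openness and sequential openness. Your direct argument with a basic product box $U\times V$ avoids sequences entirely and, as you observe, works verbatim for arbitrary topological spaces, so it yields a strict generalization of the stated proposition. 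The sequential argument in the paper is perhaps more vivid in the metric setting but buys nothing extra; your version is both shorter and more general.
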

\begin{proof}
It is evident that $f$ is a continuous surjection.
Suppose that it is not open.
Then there is an open subset $G$ of $Z$ such that $f(G)$ is not open in $Y$.
We get a sequence $y_n\in Y\setminus f(G)$ converging to a point $y\in f(G).$
Then $f(x,y)=y$ for some $(x,y)\in G$.
Let $x_n$ be chosen so that $x_n\in B(x,1/n)$ and $x_n\in S\iff y_n\in P$.
Then $(x_n,y_n)\in Z$ converges to $(x,y)\in G$.
Since $G$ is open, we get a point $(x_k,y_k)\in G$.
Then $f(x_k,y_k)=y_k\in f(G)$.
\end{proof}

We have been interested so far in obtaining quotients homemorphic to the unit interval.
It is worth noting at this point that each connected sequential topological space can be obtained as a quotient of a connected metric space, \cite{banakh} (a topological space is sequential if each non-closed subset
contains a sequence converging to its complement; in particular, all first countable spaces are sequential).

\end{document}